\documentclass[preprint,12pt]{elsarticle}





\usepackage{amssymb}
\usepackage{amsmath}
\usepackage{amsthm}
\usepackage{enumerate}
\usepackage{endnotes}


\newcommand{\old}[1]{{}}

\newcommand{\bb}{\mathbb}

\newcommand{\R}{\bb R}

\newcommand{\N}{\bb N}

\DeclareMathOperator\supp{supp}

\def\Item$#1${\item $\displaystyle#1$
   \hfill\refstepcounter{equation}(\theequation)}

\theoremstyle{definition}
\newtheorem{prop}{Proposition}[section]
\newtheorem{theorem}[prop]{Theorem}
\newtheorem{lemma}[prop]{Lemma}

\newtheorem{claim}{Claim}
\newtheorem{corollary}[prop]{Corollary}

\newtheorem{remark}[prop]{Remark}
\newtheorem{question}{Question}
\newtheorem{example}[prop]{Example}

\newtheoremstyle{TheoremNum}
        {\topsep}{\topsep}              
        {\itshape}                      
        {}                              
        {\bfseries}                     
        {.}                             
        { }                             
        {\thmname{#1}\thmnote{ \bfseries #3}}
    \theoremstyle{TheoremNum}
    \newtheorem{theorem-pre}{Theorem}

\numberwithin{equation}{section}






\journal{Operations Research Letters}

\begin{document}

\begin{frontmatter}



\title{On the sufficiency of finite support duals in semi-infinite linear programming}


\author[jhu]{Amitabh Basu}
\ead{basu.amitabh@jhu.edu}
\author[booth]{Kipp Martin}
\ead{kmartin@chicagobooth.edu}
\author[booth]{Christopher Thomas Ryan}
\ead{chris.ryan@chicagobooth.edu}

\address[jhu]{The Johns Hopkins University} 
\address[booth]{University of Chicago, Booth School of Business}

\begin{abstract}
We consider semi-infinite linear programs with countably many constraints indexed by the natural numbers. 
When the constraint space is  the vector space of all real valued sequences, we show that the finite support (Haar) dual is equivalent to the algebraic Lagrangian dual of the linear program. This settles a question left open by Anderson and Nash~\cite{anderson-nash}. This result implies that if there is a duality gap between the primal linear program and its finite support dual, then this duality gap  cannot be closed by considering the larger space of dual variables that define the algebraic Lagrangian dual. However,  if the constraint space corresponds to certain subspaces of all real-valued sequences,  there may be a strictly positive duality gap with the finite support dual,  but  a  zero duality gap with the algebraic Lagrangian dual. 
\end{abstract}

\begin{keyword}
semi-infinite linear programs; finite support duals; duality gaps
\end{keyword}

\end{frontmatter}

\section{Introduction}

We begin with a brief review of notation and  basic definitions for semi-infinite linear programs. 
Let $Y$ be a vector space. The \emph{algebraic dual} of $Y$ is the set of linear functionals with domain $Y$ and is denoted by  $Y'$. Let $\psi \in Y'$. The evaluation of $\psi$ at $y$ is denoted by $ \langle y, \psi \rangle$; that is, $ \langle y, \psi \rangle = \psi(y)$. We emphasize that the theory presented here deals with algebraic dual spaces and \emph{not} topological dual spaces. Discussion of how our work relates to topological duals appears in Remarks~\ref{remark:algebraic-dual-big-cone-small}~and~\ref{remark:contrast-with-16-3}.

Let $P$ be a convex cone in $Y$. A convex cone $P$ is {\em pointed} if and only if $P \cap -P = \left\{0\right\}$.  In the rest of the paper all convex cones are assumed to be pointed.  A pointed convex cone $P$ in $Y$ defines a vector space ordering $\succeq_P$ of $Y$, with $y \succeq_P y'$ if  $y - y' \in P$. 
The \emph{algebraic dual cone} of $P$ is
\begin{align*}
P' = \left\{\psi \in Y' : \langle y, \psi \rangle \ge 0 \text{ for all } y \in P\right\}.
\end{align*}
Elements of $P'$ are called \emph{positive linear functionals} on $Y$ (see for instance, page 17 of \cite{holmes}). Let $A:X \rightarrow Y$ be a linear mapping from vector space $X$ to vector space  $Y$. The \emph{algebraic adjoint} $A' : Y' \to X'$ is a linear operator defined by $A'(\psi) = \psi \circ A$ and satisfies $ \langle x, A'(\psi) \rangle = \langle A(x), \psi \rangle$ where $\psi \in Y'$ and $x \in X$.   Using this notation, define the primal  conic optimization problem
\begin{align*}\label{eq:conlp-primal}
\begin{array}{rl}
\quad \inf_{x\in X} & \langle x, \phi  \rangle \\
\textrm{s.t.} & A(x) \succeq_P b
\end{array}\tag{\text{ConLP}}
\end{align*}
where $b\in Y$  and $\phi$ is a linear functional on $X$.


Now  define the standard algebraic Lagrangian dual for~\eqref{eq:conlp-primal}.
{\small
\begin{eqnarray*}
\sup_{\psi \in P'} \inf_{x \in X}  \{  \langle x, \phi  \rangle  + \langle b - A(x), \psi \rangle \}  
& = & \sup_{\psi \in P'} \inf_{x \in X} \big\{  \langle x, \phi  \rangle  + \langle b, \psi \rangle -\langle  A(x), \psi \rangle \big\}  \\ 
& = & \sup_{\psi \in P'} \big\{  \langle b, \psi \rangle  + \inf_{x \in X} \{   \langle x, \phi  \rangle  -\langle  A(x), \psi \rangle \} \big\} \\\  
& = & \sup_{\psi \in P'} \big\{  \langle b, \psi \rangle  + \inf_{x \in X} \{  \langle x, \phi  \rangle -\langle x, A^{\prime}(\psi) \rangle \}  \big\} \\   
& = & \sup_{\psi \in P'} \big\{  \langle b, \psi \rangle  + \inf_{x \in X} \langle x, \phi - A'(\psi) \rangle \big\}. 
\end{eqnarray*}} 
Since $x \in X$ is unrestricted, if  $ \phi - A^{\prime}(\psi)$ is not the zero linear functional on $X,$ then the inner minimization goes to negative infinity, so  require $\phi - A^{\prime}(\psi) = \theta_{X}$, where $\theta_{X}$ is the zero linear functional on $X$. Then the Lagrangian dual of \eqref{eq:conlp-primal} is
\begin{align*}\label{eq:conlp-dual}
\begin{array}{rrl}
\sup & \langle  b, \psi \rangle & \\
  {\rm s.t.} & A^{\prime} (\psi) &= \phi \\
& \psi &\in P'.
\end{array}\tag{\text{ConDLP}}
\end{align*}
This problem is called the {\it algebraic} Lagrangian dual  of \eqref{eq:conlp-primal} since the linear functionals $\psi$ that define the dual problem are in $Y^{\prime}$, which is the algebraic dual of $Y.$

\paragraph{Semi-infinite linear programs}\label{s:silp}
Consider the case where  $X = \R^{n}$ and  $Y = \R^I$, i.e., the vector space of real-valued functions with domain $I$  where  $I$ is an arbitrary (potentially infinite) set. Let $a^1, a^2, \ldots, a^n$ and $b$ be functions in $Y = \R^I$. Let $A : \R^n \rightarrow \R^I$ be the linear mapping $x \mapsto (a^1(i)x_1 + a^2(i)x_2 + \ldots + a^n(i)x_n : i \in I)$. Let $\R_+^I$  denote the pointed cone of $u \in \R^I$ such that $u(i)\ge 0$ for all $i\in I$ and let $P = \R^I_+$.  With this specification for the vector spaces $X$ and $Y$, the map $A$, right hand side $b$ and cone $P$, problem~\eqref{eq:conlp-primal}  reduces to the standard semi-infinite linear program
\begin{align*}\label{eq:SILP}
\begin{array}{rl}
\qquad  \inf_{x\in \R^n} & \phi^\top x \\
 \textrm{s.t.} & \sum_{k = 1}^n a^k(i)x_k \geq b(i) \text{ for all } i\in I.
\end{array}\tag{\text{SILP}}
\end{align*}
 There is a slight abuse of notation here.   When  $X = \R^{n}$,  the algebraic dual $X^{\prime}$   is isomorphic to  $\R^{n}$   so each linear functional   $\phi \in X^{\prime}$  can be mapped to a  vector in $\R^n$. Thus, the primal objective function $\langle x, \phi \rangle$  in~\eqref{eq:conlp-primal}, is replaced by the inner product  $\phi^\top x$ with $\phi$ now treated as a vector in $\R^n.$ 

Next consider two alternative duals of \eqref{eq:SILP}: the algebraic Lagrangian dual and the finite support dual due to Haar \cite{haar1924}. \old{  Let $\ge$ denote the standard vector space ordering on $\R^I$. That is, $u \ge v$ if and only if $u(i) \ge v(i)$ for all $i \in I$.  }Recall that $(\R^I_+)'$ denotes the algebraic dual cone of $P=\R^I_+$. 
The algebraic Lagrangian dual of \eqref{eq:SILP} using~\eqref{eq:conlp-dual} is
\begin{align*}\label{eq:DSILP}
\begin{array}{rrl}
\sup & \langle  b, \psi \rangle & \\
  {\rm s.t.} & A^{\prime} (\psi) &= \phi \\
& \psi &\in (\R^I_+)'.
\end{array}\tag{\text{DSILP}}
\end{align*}

A second dual is  derived as follows. Instead of considering every linear functional $\psi \in (\R^I_+)'$ as above,  consider a subset of these linear functionals, called the {\em finite support elements}. For $u \in \R^I$, the \emph{support} of $u$ is the set $\supp(u) = \left\{i : u(i) \neq 0\right\}$. The subspace $\R^{(I)}$ denotes those functions in $\R^I$ with finite support.\old{ The subspace $\R^{(I)}$ inherits the  ordering defined earlier for $\R^{I}$.} Let $\R_+^{(I)}$  denote the pointed cone of $v \in \R^{(I)}$ such that $v(i)\ge 0$ for all $i\in I$\old{ and let  $(\R^{(I)}_+)'$ denote its dual cone}. 
Under the natural embedding of $\R^{(I)}$ into $(\R^I)'$ for $u \in \R^I$ and $v \in \R^{(I)}$, write $\langle u, v \rangle = \sum_{i \in I} u(i)v(i)$. The latter sum is well-defined since $v$ has finite support. Under this embedding, $\R^{(I)}_+$ is a subset of $(\R^I_+)'$. Moreover, under this embedding, $A' : (\R^I)' \to X' (=\R^n)$ restricted to $\R^{(I)}$ becomes the map $A'(v) = ( \sum_{i \in I} a^{k}(i) v(i) )_{k=1}^n$.   
The {\it finite support dual} is
\begin{align*}\label{eq:FDSILP}
\begin{array}{rrll}
\sup &\sum_{i \in I} b(i) v(i) & & \\
   {\rm s.t.} & \sum_{i \in I} a^{k}(i) v(i) &= \phi_k, & k = 1, \ldots, n \\
& v &\in \R_+^{(I)}. &
\end{array}\tag{\text{FDSILP}}
\end{align*}

The finite support dual  \eqref{eq:FDSILP}   is restricted    to the linear functionals   $\psi$ that can be   mapped to  $v \in \R_+^{(I)}$ under the standard embedding of $\R^{(I)}$ into $(\R^I)^{'}$.  Therefore $v(\ref{eq:FDSILP}) \le v(\ref{eq:DSILP})$  where the optimal value of optimization problem ($*$) is denoted by  $v(*)$. This leads naturally to the following question. 

\begin{question}\label{q}
Is it possible that 
$v(\ref{eq:SILP}) = v(\ref{eq:DSILP})$ and yet $v(\ref{eq:SILP}) > v(\ref{eq:FDSILP})$? 
In other words, can there exist a duality gap between the primal and its finite support dual that is closed by considering the algebraic Lagrangian dual?
\end{question}

This question is significant for the study of semi-infinite linear programming for at least two reasons. First, most duality theory has been developed for the finite support dual \cite{charnes1963duality,goberna2007sensitivity,goberna1998linear,karney81,lopez2012stability,ochoa2012stability}. Moreover, the only other dual given significant attention in the literature is the ``continuous dual" (see for instance \cite{goberna2010post,goberna1996stability}) and this dual shares many of the same duality properties as the finite support dual. Indeed, as stated by Goberna in \cite{goberna2010post}: ``all known duality theorems guaranteeing the existence of a zero duality gap have the same hypotheses for both dual problems [the finite support dual and the continuous dual]". He even goes so far to say that the finite support dual and the continuous dual are ``equivalent in practice." 

Second, the algebraic Lagrangian dual is notoriously challenging to characterize and work with. Indeed, to the author's knowledge, little has been said about the algebraic dual in the semi-infinite programming literature (only a few studies mention it, and they do not draw conclusions about its connection with the finite support dual \cite{anderson-nash,nash1985algebraic}).
 
\old{Borwein answered Question~\ref{q} negatively in the case where $I$ is endowed with a topology that makes it into a compact Hausdorff space \cite{borwein1983adjoint}.} 

To the authors' knowledge Question~\ref{q} has not been settled for $I = \N$, i.e., semi-infinite linear programs with countably many constraints. Indeed, on page 66 of Anderson and Nash's seminal work \cite{anderson-nash} they write: ``It seems to be hard, if not impossible, to find examples of countable semi-infinite programs which have a duality gap in this formulation [the finite support dual], but have no duality gap when we take $W$ to be a wider class of sequences" where $W$ refers to the vector space of dual variables. In  our notation,   $W = (\R^\N)'$  in  \eqref{eq:DSILP}  and  $W = \R^{(\N)}$   in \eqref{eq:FDSILP}.  
Semi-infinite linear programs with countably many constraints have been well-studied in the literature, particularly from the perspective of duality \cite{charnes1963duality, karney81,karney85}. In fact, one can even show that, theoretically, there is no loss in generality in considering the countable case. Theorem 2.3 in \cite{karney85} shows that every semi-infinite linear program with uncountable many constraints can be equivalently reposed over a countable subset of the original constraints.

The main result of this paper (Theorem~\ref{thm:no-gap}) proves that the answer to Question~\ref{q} is no for the case of $I = \N$, settling Anderson and Nash's open question. We show that $v(\ref{eq:DSILP}) = v(\ref{eq:FDSILP})$ by establishing that \eqref{eq:DSILP} and \eqref{eq:FDSILP} are equivalent programs.



However, there is a subtlety in Question~\ref{q} to keep in mind  for semi-infinite linear programs with countably many constraints. In the above discussion, a semi-infinite linear program with countably many constraints was cast as an instance of~\eqref{eq:conlp-primal} with $X = \R^n$, $Y=\R^{\N}$, $A : X \to Y$ defined by $A(x)= (a^1(i)x_1 + a^2(i)x_2 + \ldots + a^n(i)x_n : i \in I)$, and $P = \R^\N_+$. Then~\eqref{eq:DSILP} was formed using~\eqref{eq:conlp-dual}. However, if the functions $a^1, a^2, \ldots, a^n$ and $b$ lie in a subspace $\mathcal{V} \subseteq \R^\N$, then we may use $Y = \mathcal{V}$ and $P = \mathcal{V}\cap \R^\N_+$ to write the semi-infinite linear program as an instance of~\eqref{eq:conlp-primal}. The corresponding~\eqref{eq:conlp-dual} is
 \begin{align*}\label{eq:DSILPprime}
\begin{array}{rrl}
\sup & \langle  b, \psi \rangle & \\
  {\rm s.t.} & A^{\prime} (\psi) &= \phi \\
& \psi &\in (\mathcal{V}\cap \R^\N_+)'
\end{array}\tag{\text{DSILP($\mathcal{V}$)}}
\end{align*}where $(\mathcal{V}\cap \R^\N_+)' \subseteq \mathcal{V}'$ is the dual cone of $P=\mathcal{V}\cap \R^\N_+,$ which lies in the algebraic dual of $\mathcal{V}$.


It is quite possible that a positive linear functional defined on $(\mathcal{V}\cap \R^\N_+)'$ cannot be extended to $(\R^N_+)'$. This implies \eqref{eq:DSILP} (with $I=\N$) may have a smaller value than \eqref{eq:DSILPprime}, i.e., $v(\ref{eq:DSILP}) < v(\ref{eq:DSILPprime})$.  In this context, the following question is a natural extension of Question~\ref{q}.

\begin{question}\label{q2}
Is it possible that $v(\ref{eq:SILP}) = v(\ref{eq:DSILPprime})$ and  $v(\ref{eq:SILP}) > v(\ref{eq:FDSILP}) = v(\ref{eq:DSILP})$ when $a^1, \ldots, a^n, b \in \mathcal{V}$ for some subspace $\mathcal{V} \subseteq \R^\N$? In other words, when the constraint space $\mathcal V$ lies in a subspace of $\R^\N$, can there exist a duality gap between the primal and its finite support dual \eqref{eq:FDSILP}, that is closed by considering the algebraic Lagrangian dual defined according to that subspace?
\end{question}

We show in Section~\ref{s:duality-gaps} that this can happen.  More concretely,  in Example~\ref{example:not-primal-optimal}  in Section~\ref{s:duality-gaps}, there is  a duality gap between (SILP) and the finite support dual \eqref{eq:FDSILP}. However, if $a^1, \ldots, a^n, b$ are considered as elements of the space of convergent real sequences $c$,  
then~\eqref{eq:SILP}    is a special case of~\eqref{eq:conlp-primal} with $X=\R^n$, $ \mathcal V =Y=c$, $A : X \to Y$ and $P= c_+$ (the cone of convergent sequences with nonnegative entries), and there is no duality gap with its algebraic Lagrangian dual \eqref{eq:DSILc}
\begin{align*}\label{eq:DSILc}
\begin{array}{rrl}
\sup & \langle  b, \psi \rangle & \\
  {\rm s.t.} & A^{\prime} (\psi) &= \phi \\
& \psi &\in (c_+)'
\end{array}\tag{\text{DSILP($c$)}}
\end{align*}
where $(c_+)'$ is the algebraic dual cone of $c_+$. The same result also holds when $Y$ is the subspace of bounded real sequences $\ell_\infty$. 

\section{Main result}

\begin{lemma}\label{lemma:pos-functionals}
For every $\psi \in (\R^{\N}_+)'$, there exists $u \in \R_+^{(\N)}$ such that $\langle y, \psi \rangle = \sum_{i\in \N}y(i)u(i)$ for every $y \in \R^\N$. In other words, every positive linear functional on $\R^\N$ can be represented by a positive finite support dual vector.
\end{lemma}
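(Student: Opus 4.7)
The plan is to define $u \in \R^\N$ by $u(i) := \psi(e_i)$, where $e_i \in \R^\N$ denotes the $i$-th standard unit vector (so $e_i(j) = 1$ if $j = i$ and $0$ otherwise). Since each $e_i \in \R^\N_+$, the positivity of $\psi$ forces $u(i) \ge 0$, so $u \in \R^\N_+$. The work then splits into two parts: (i) show that $u$ has finite support (so that in fact $u \in \R_+^{(\N)}$), and (ii) show that $\langle y, \psi \rangle = \sum_{i \in \N} y(i)\, u(i)$ for every $y \in \R^\N$.

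For part (i) I would argue by contradiction. Suppose $u(i) > 0$ on an infinite index set, enumerated as $i_1 < i_2 < \cdots$. Define $z \in \R^\N_+$ by $z(i_k) := k / u(i_k)$ and $z(j) := 0$ for $j \notin \{i_k\}$. For every $N$ the finite-support truncation $z_N := \sum_{k=1}^N z(i_k)\, e_{i_k}$ satisfies $z - z_N \in \R^\N_+$, so positivity combined with linearity gives
\[ \langle z, \psi \rangle \;\ge\; \langle z_N, \psi \rangle \;=\; \sum_{k=1}^N z(i_k)\, \psi(e_{i_k}) \;=\; \sum_{k=1}^N k \;=\; \tfrac{N(N+1)}{2}. \]
Letting $N \to \infty$ contradicts $\langle z, \psi \rangle \in \R$, so $u$ must have finite support.

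For part (ii), by linearity and the decomposition $y = y^+ - y^-$ it suffices to treat $y \in \R^\N_+$. Let $S := \supp(u)$, which is now finite, and split $y$ into its restrictions $y|_S$ and $y|_{\N \setminus S}$. The restriction $y|_S$ has finite support, so linearity gives $\langle y|_S, \psi \rangle = \sum_{i \in S} y(i)\, u(i) = \sum_{i \in \N} y(i)\, u(i)$; it remains to show that $\langle z, \psi \rangle = 0$ whenever $z \in \R^\N_+$ and $\supp(z) \cap S = \emptyset$. If $\supp(z)$ is finite this is immediate from linearity, since $u$ vanishes on $\supp(z)$. If $\supp(z)$ is infinite, enumerate $\supp(z) = \{t_1 < t_2 < \cdots\}$ and construct the \emph{dominating} element $y^* \in \R^\N_+$ defined by $y^*(t_m) := m\, z(t_m)$ and $y^*(j) := 0$ for $j \notin \supp(z)$. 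Writing $z^{(k)}$ for the tail obtained from $z$ by zeroing out the first $k-1$ nonzero coordinates, a componentwise check yields $y^* - k\, z^{(k)} \in \R^\N_+$ for every $k$; moreover $z - z^{(k)}$ has finite support disjoint from $S$, so linearity forces $\langle z^{(k)}, \psi \rangle = \langle z, \psi \rangle$. Positivity of $\psi$ therefore gives $\langle y^*, \psi \rangle \ge k\, \langle z, \psi \rangle$ for every $k \in \N$, which is possible only if $\langle z, \psi \rangle = 0$.

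The hard step is the infinite-support case of part (ii). In an abstract algebraic dual, positive linear functionals need not be representable by vectors at all, and indeed the analogous statement is \emph{false} over $\ell^\infty$, where Hahn--Banach extensions of Banach-limit-type functionals supply positive functionals with no finite-support representative. The decisive feature of $\R^\N$ that rescues the argument is the absence of any boundedness restriction on its elements: this is precisely what allows the dominating sequence $y^*(t_m) = m\, z(t_m)$ to belong to $\R^\N$, converting the unboundedness of $\{k \langle z, \psi \rangle\}_{k \in \N}$ into a genuine contradiction.
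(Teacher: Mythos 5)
Your proof is correct. It runs on the same engine as the paper's --- in both cases the decisive move is to build a single element of $\R^\N_+$ that dominates an unbounded family of partial sums and then contradict the real-valuedness of $\psi$, exploiting exactly the feature you flag at the end (no boundedness restriction on elements of $\R^\N$) --- but the decomposition is organized differently. The paper proves one uniform claim: there is a single cutoff $M$ such that $\langle v,\psi\rangle=0$ for \emph{every} $v\in\R^\N_+$ vanishing in its first $M$ coordinates (obtained by normalizing putative counterexamples $v^n$ to $\langle \hat v^n,\psi\rangle=1$ and summing them into one sequence $a$ with $\langle a,\psi\rangle\ge N$ for all $N$). With that in hand, any $y\ge 0$ splits as $\sum_{i\le M}y(i)e^i$ plus an annihilated tail, and the representation falls out in one line. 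You instead first show $u=(\psi(e_i))_i$ has finite support $S$ via the weighted sequence $z(i_k)=k/u(i_k)$, and then need a \emph{second} domination argument ($y^*(t_m)=m\,z(t_m)$ together with the tails $z^{(k)}$) to kill $\psi$ on nonnegative vectors supported off $S$ --- a step that is genuinely necessary in your setup, since such vectors can be supported at indices interleaved with $S$, and which checks out. The paper's uniform $M$ buys a shorter second half at the cost of a slightly less tight support (it may include indices $i\le M$ with $u(i)=0$, which is harmless); your version isolates the exact support but pays with two contradiction arguments instead of one. Both are complete and fully algebraic.
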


\begin{proof}
Consider any $\psi \in (\R_+^\N)^\prime$, i.e.,  $\langle y, \psi \rangle \geq 0$ for all $y \geq 0$.  We show that $\psi$ can be represented by a finite support linear functional.
\begin{claim}\label{claim:finite_M}
There exists $M \in \N$ such that for all $v \in \R^\N_+$ whose first $M$ components are zero, $\langle v, \psi \rangle = 0$.
\end{claim}
\begin{proof}
Suppose no such $M$ exists. Then for every $n \in \N,$ there exists $v^n \in \R^\N_+$ such that $\langle v^n, \psi \rangle > 0$ (we can assume strictly greater than zero without loss) and the first $n$ components of $v^n$ are zero. Consider the sequence of vectors $\hat v^n = \frac{v^n}{\langle v^n , \psi \rangle}$ for $n \in \N$. Observe that $\langle \hat v^n, \psi\rangle = 1$ for all $n \in N$.
 Now, consider vector $a \in \R^\N_+$ where $a(i) = \sum_{n \in \N} \hat v^n(i)$ = $\sum_{n = 1}^{i} \hat v^n(i) + \sum_{n > i} \hat v^n(i)$ for all $i \in \N$.  Since $\hat v^n(i) = 0$ for all $n > i$,    $\sum_{n > i} \hat v^n(i) = 0$. This implies  $\sum_{n \in \N} \hat v^n(i)$ is a finite sum and  therefore well-defined.   Given any $N \in \N$,   it follows  from the definition of $a(i)$,  that  $a(i) - \sum_{n=1}^{N} \hat v^n(i) = \sum_{n \in \N} \hat v^n(i) - \sum_{n=1}^{N} \hat v^n(i) = \sum_{n > N} \hat v^n(i)$.  We have  $\sum_{n > N} \hat v^n(i)  \geq 0$  since the $\hat v^n$ are in $\R^\N_+$.  This implies  $a - \sum_{n = 1}^N \hat v^n \geq 0$. 
 Thus, $\langle a - \sum_{n = 1}^N \hat v^n, \psi \rangle \geq 0$ for every $N \in \N$. By linearity of $\psi$, this implies that $\langle a, \psi \rangle \geq \sum_{n=1}^N \langle \hat v^n, \psi \rangle = N$ for every $N \in \N$. But this means $\langle a, \psi \rangle$ cannot be a finite number, which is a contradiction of the fact that $\psi$, being a linear functional, is real valued. 
\end{proof}

Let $M$ be the natural number from Claim~\ref{claim:finite_M}. Now let $e^i$ denote the element of $\R^\N$ with 1 in the $i$-th coordinate and $0$ everywhere else. Let $u \in \R^{(\N)}$ be a finite support element given as follows
$$
u(i) = \left\{ \begin{array}{rl} \langle e^i, \psi \rangle, & i \leq M \\ 0, & i > M.\end{array}\right. 
$$
\begin{claim}\label{claim:positive-elements}
$\langle y, \psi \rangle = \sum_{i \in \N}u(i) y(i)$ for every $y \in \R^\N_+$.
\end{claim}
\begin{proof} 
Observe that any $y \in \R^\N_+$ can be represented as $y = \sum_{i = 1}^M y(i)e^i + v$ where $v \in \R^N_+$ has zeros in its first $M$ components.  By Claim~\ref{claim:finite_M}, $\langle v, \psi \rangle = 0$. Hence $\langle y, \psi\rangle = \sum_{i=1}^M y(i) \langle e^i, \psi \rangle + \langle v, \psi \rangle = \sum_{i \in \N}u(i) y(i)$.\end{proof}

Claim~\ref{claim:positive-elements} only applies to $y \geq 0$, i.e., the nonnegative elements in $\R^\N$. To complete the proof, we need to show that for arbitrary $y \in \R^\N$, $\langle y, \psi \rangle = \sum_{i \in \N}u(i) y(i)$. Define $y^+, y^- \in \R^\N_+$ as follows: for each $i\in \N$, $y^+(i) = \max\{y(i), 0\}$ and $y^-(i) = \max\{-y(i),0\}$. Thus, we can write $y \in \R^\N$ as $y = y^+ - y^-$. Therefore,
$$
\begin{array}{rcl}
\langle y, \psi \rangle  & = &\langle y^+, \psi \rangle - \langle y^-, \psi \rangle \\
& = & \sum_{i \in \N}u(i) y^+(i) - \sum_{i \in \N}u(i) y^-(i) \\
& = & \sum_{i \in \N}u(i) (y^+(i) - y^-(i)) \\
& = & \sum_{i \in \N}u(i) y(i)
\end{array}
$$ where the second equality follows from Claim~\ref{claim:positive-elements}.\end{proof}

Recall every $u \in \R_+^{(\N)}$ maps to a positive linear functional $\psi$ over $\R^\N$ via $\psi(x) = \sum_{i=1}^\infty u(i)x(i)$. Thus, $\R^{(\N)}_+$ can be embedded into a subset of $(\R^\N_+)'$. Combined with Lemma~\ref{lemma:pos-functionals}, this implies $(\R^\N_+)' \cong \R_+^{(\N)}$. In other words, the algebraic dual cone of $\R^\N_+$ is isomorphic to the positive cone $\R_+^{(\N)}$ in $\R^{(\N)}$.

\begin{remark}\label{remark:algebraic-dual-big-cone-small}
The fact that $(\R^\N_+)'$ is isomorphic to $\R_+^{(\N)}$ does not contradict the well-known fact that the full algebraic dual $(\R^{\N})'$ of $\R^{\N}$ is difficult to characterize. Indeed, the full algebraic dual $(\R^\N)'$ is of uncountable dimension (see page 195 of \cite{hitchhiker}) whereas the algebraic dual cone $(\R^\N_+)'$ is isomorphic to a subset of $\R^{(\N)}$, which has countable dimension. One intuitive justification is that $\R^\N_+$ is not a ``full dimensional" subset of $\R^\N$. This is because $\R^\N_+$ has empty interior in every linear topology. For a justification of this fact see~\ref{ap:no-interior}. 
\end{remark}

\begin{remark}\label{remark:contrast-with-16-3}
Our Lemma~\ref{lemma:pos-functionals} shares some similarities with Theorem~16.3 of \cite{hitchhiker}. We emphasize that Lemma~\ref{lemma:pos-functionals} does not contradict, nor depend on,  Theorem 16.3 of \cite{hitchhiker}. The latter result states that, under the usual product topology of $\R^\N$, the \emph{topological dual} $(\R^\N)^*$ of $\R^\N$ is isomorphic to $\R^{(\N)}$. Lemma~\ref{lemma:pos-functionals} differs from this result in two ways. First, Lemma~\ref{lemma:pos-functionals} demonstrates an equivalence between the algebraic dual cone of $\R^\N_+$ and the positive cone of $\R^{(\N)}$. In other words, Theorem~16.3 of \cite{hitchhiker} concerns \emph{all continuous} linear functionals on $\R^\N$ whereas our result concerns only \emph{positive} linear functionals. Second, our result is a statement about algebraic duality whereas Theorem~16.3 of \cite{hitchhiker} concerns topological duality. Our proof of Lemma~\ref{lemma:pos-functionals} is direct and entirely algebraic. It does require Theorem~16.3 of \cite{hitchhiker} or any topological concepts. 
\end{remark}

Using Lemma~\ref{lemma:pos-functionals}, Question~\ref{q} is answered for $I = \N$.

\begin{theorem}\label{thm:no-gap}
When $I = \N$, $v(\ref{eq:SILP}) = v(\ref{eq:DSILP})$ if and only if $v(\ref{eq:SILP}) = v(\ref{eq:FDSILP})$. 
\end{theorem}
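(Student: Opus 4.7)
The plan is to show that $v(\ref{eq:DSILP}) = v(\ref{eq:FDSILP})$; once this equality is in hand, the biconditional in the theorem is immediate, because $v(\ref{eq:SILP}) = v(\ref{eq:DSILP})$ and $v(\ref{eq:SILP}) = v(\ref{eq:FDSILP})$ become the same statement. The paper has already observed the inequality $v(\ref{eq:FDSILP}) \le v(\ref{eq:DSILP})$ (since the finite support dual optimizes over a restricted set of functionals), so the real task is to prove the reverse inequality $v(\ref{eq:DSILP}) \le v(\ref{eq:FDSILP})$.

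To establish this, I would use Lemma~\ref{lemma:pos-functionals} to convert each feasible solution of (DSILP) into a feasible solution of (FDSILP) with the same objective value. Given any feasible $\psi \in (\R^\N_+)'$ for (DSILP), the lemma produces $u \in \R_+^{(\N)}$ such that $\langle y, \psi \rangle = \sum_{i\in \N} u(i) y(i)$ for every $y \in \R^\N$. Plugging in $y = b$ shows that $\langle b, \psi \rangle = \sum_{i \in \N} b(i) u(i)$, so objective values are preserved. Plugging in $y = a^k$ for $k=1,\ldots,n$ shows that $A'(\psi)_k = \langle a^k, \psi \rangle = \sum_{i \in \N} a^k(i) u(i)$, so the constraint $A'(\psi) = \phi$ translates precisely into the equality constraint of (FDSILP). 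Nonnegativity of $u$ is guaranteed by the lemma. Hence every feasible $\psi$ yields a feasible $u$ with the same value, giving $v(\ref{eq:DSILP}) \le v(\ref{eq:FDSILP})$.

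Combining with the easy direction yields $v(\ref{eq:DSILP}) = v(\ref{eq:FDSILP})$, and the theorem follows. The substantive work has all been absorbed into Lemma~\ref{lemma:pos-functionals}, so there is no remaining obstacle of any depth; the only care needed is the bookkeeping to verify that the representation supplied by the lemma really does respect both the objective and the linear constraint, which amounts to evaluating it at the specific vectors $b$ and $a^1, \dots, a^n$.
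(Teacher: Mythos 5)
Your proposal is correct and follows essentially the same route as the paper: both reduce the theorem to the equality $v(\ref{eq:DSILP}) = v(\ref{eq:FDSILP})$ and obtain it from Lemma~\ref{lemma:pos-functionals}, which identifies $(\R^\N_+)'$ with $\R_+^{(\N)}$. You merely spell out the feasibility and objective-value bookkeeping that the paper compresses into the phrase ``equivalent under the standard embedding.''
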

\begin{proof}
It  suffices to show that $v(\ref{eq:DSILP}) = v(\ref{eq:FDSILP})$. Lemma~\ref{lemma:pos-functionals} implies  $(\R_+^\N)' \cong \R_+^{(\N)}$ and thus the feasible regions of \eqref{eq:DSILP} and \eqref{eq:FDSILP} are equivalent under the standard embedding of $\R^{(\N)}$ into $(\R^\N)'$. The objectives are also equivalent under that embedding and thus $v(\ref{eq:DSILP}) = v(\ref{eq:FDSILP})$.
\end{proof}

\section{Duality gaps in proper subspaces of $\R^\N$}\label{s:duality-gaps}

By Theorem~\ref{thm:no-gap},  the optimal value of the  finite support dual  is equal to the optimal value of  the algebraic Lagrangian dual for semi-infinite linear programs with countably many constraints, when we model them as an instance of \eqref{eq:conlp-primal} using $Y = \R^\N$. However, this is not necessarily true for problems with countably many constraints  when they are modeled as \eqref{eq:conlp-primal} with $Y$ as a proper subspace of $\R^\N$. In this section, we give an affirmative answer to Question~\ref{q2} via Example~\ref{example:not-primal-optimal}.

Two examples of proper subspaces of $\R^\N$ are $\ell_\infty,$  the space of all bounded real sequences, and $c,$  the space of all convergent sequences. Clearly, $c \subset \ell_\infty \subset \R^\N$. We extend the notion of positive linear functionals to these subspaces. A linear functional $\psi \in X'$ on any subspace $X \subseteq \R^\N$, is called a positive linear functional on $X$ if $\langle v, \psi \rangle \geq 0$ for all $v \in \R^{\N}_+ \cap X$. 

Restricting to a subspace of $\R^\N$ allows for more positive linear functionals. Define the \emph{limit functional} $\psi$ on $c$ by 
\begin{align}\label{eq:limit-func}
\langle v, \psi \rangle = \lim_{i\to \infty} v (i).
\end{align}
Clearly, $\psi$ is a positive linear functional over $c$. The next result shows that $\psi$ cannot be extended to a positive linear functional over all of $\R^\N$.
\begin{lemma}\label{lem:limit-func}
The limit functional $\psi$ defined in \eqref{eq:limit-func} cannot be extended to a positive linear functional on $\R^\N$, i.e., it cannot be extended to an element of $(\R^{\N}_+)' \subseteq (\R^\N)'$.
\end{lemma}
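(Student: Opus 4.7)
My plan is to argue by contradiction, leveraging Lemma~\ref{lemma:pos-functionals} which has just been established. Suppose that there exists an extension $\tilde\psi \in (\R^\N_+)'$ whose restriction to $c$ coincides with the limit functional $\psi$. Then by Lemma~\ref{lemma:pos-functionals}, $\tilde\psi$ admits a representation by some finite support vector $u \in \R_+^{(\N)}$, so that $\langle y, \tilde\psi\rangle = \sum_{i\in\N} u(i) y(i)$ for every $y \in \R^\N$.

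Next I would pin down the values $u(i)$ by testing $\tilde\psi$ against the standard unit vectors $e^i$, which lie in $c$. Since $\lim_{j\to\infty} e^i(j) = 0$, the limit functional gives $\langle e^i, \psi\rangle = 0$, and agreement on $c$ forces $u(i) = \langle e^i, \tilde\psi\rangle = 0$ for every $i\in\N$. Hence $u$ is the zero vector, and consequently $\langle v, \tilde\psi\rangle = 0$ for every $v \in \R^\N$.

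To reach a contradiction, I then test on the constant sequence $\mathbf{1} = (1,1,1,\ldots)$, which belongs to $c$ with $\lim_{i\to\infty}\mathbf{1}(i) = 1$. The extension property requires $\langle \mathbf{1}, \tilde\psi\rangle = \langle \mathbf{1}, \psi\rangle = 1$, whereas the representation forces $\langle \mathbf{1}, \tilde\psi\rangle = 0$, a contradiction.

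I do not anticipate a serious obstacle: all the real work is already packaged inside Lemma~\ref{lemma:pos-functionals}, which reduces any candidate extension to a finite-support representation. The only thing to be careful about is to choose test vectors that lie in the subspace $c$ (so that the extension condition applies) and that together expose the incompatibility — the $e^i$'s kill every coordinate of $u$, and the constant sequence $\mathbf{1}$ then produces the contradictory nonzero value.
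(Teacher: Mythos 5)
Your proof is correct, but it takes a genuinely different route from the paper's. You invoke Lemma~\ref{lemma:pos-functionals} as a black box: any positive extension $\tilde\psi$ must be represented by a finite-support vector $u$, the unit vectors $e^i\in c$ force every coordinate $u(i)=\langle e^i,\psi\rangle=0$, and then $\langle \mathbf{1},\tilde\psi\rangle=1$ is impossible. This is airtight --- the only points worth checking are that each $e^i$ really lies in $c$ with limit $0$ and that the representation of Lemma~\ref{lemma:pos-functionals} holds for \emph{all} of $\R^\N$, both of which are fine. The paper instead gives a self-contained positivity argument that does not use Lemma~\ref{lemma:pos-functionals} at all: it evaluates the candidate extension $\overline\psi$ on the unbounded sequence $v=(1,2,3,\dots)$ to obtain some finite value $\alpha\ge 0$, chooses $M=\lceil\alpha\rceil+1$, and subtracts the convergent staircase $m=(0,\dots,0,M,M,\dots)$ (which has limit $M$, so $\langle m,\overline\psi\rangle=M$ by the extension property) to get a nonnegative vector $v-m$ with $\langle v-m,\overline\psi\rangle=\alpha-M<0$, a contradiction. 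Your version is shorter and makes the logical structure of the paper transparent (the lemma on finite-support representations does all the work); the paper's version is independent of that lemma and re-exposes, in a concrete instance, the same ``unbounded test vector'' mechanism that drives Claim~\ref{claim:finite_M}. Either proof is acceptable.
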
 

\begin{proof}
%
Assume $\overline{\psi}$ is an extension of $\psi$ and that $\overline{\psi}$ is  a positive linear functional on $\R^\N$. We shall derive a contradiction. Let $\mathbf{1} \in  \R^{\N}_+ \cap c$ be the all ones sequence. Then $\langle \mathbf{1}, \overline{\psi} \rangle = \langle \mathbf{1}, \psi\rangle = 1$. Let $v = (n)_{n \in \N}$ be the sequence $(1,2,3, \ldots)$. Since $v$ is in the nonnegative orthant $\langle v, \overline{\psi} \rangle = \alpha \geq 0$. Let $M = \lceil \alpha \rceil + 1$. Consider the sequence $m = (0,0,0, ..., M, M, M, \ldots)$ where the first $M-1$ entries are 0. Then $v - m$ is in the nonnegative orthant, but $\langle v-m, \overline{\psi} \rangle = \langle v, \overline{\psi} \rangle - \langle m, \overline{\psi} \rangle = \alpha - \langle m, \psi \rangle = \alpha - M < 0$, and this is a contradiction to the assumption that $\overline{\psi}$ is a positive linear functional. Therefore $\psi$ cannot be extended to a positive linear functional on $\R^\N$.
%
\end{proof}


Although positive linear functionals on the space $c$ cannot be extended to    $(\R^{\N}_+)'$, they can be extended to positive linear functionals on $\ell_\infty$ as shown in Lemma~\ref{lem:limit-func-extend} below. 
\old{We first need to introduce some notions. A vector space $X$ is called an {\em ordered vector space} if it has a partial order $\succeq$ satisyfing 
\begin{enumerate}
\item $x \succeq y$ implies $x + z \succeq y + z$ for all $x, y, z \in X$.
\item $x \succeq y$ implies $\alpha x \succeq \alpha y$ for all $\alpha \in \R_+$ and $x, y \in X$.
\end{enumerate}
An ordered vector space is called a {\em Riesz space} if it is also a lattice under the partial order. $\R^\N$, $\ell_\infty$ and $c$ are all Riesz spaces under the standard partial order $x \succeq y$ if and only if $x\geq y$. Suppose $X$ is a Riesz spce, and let $M$ be a subspace of $X$. We say that $M$ {\em majorizes} $X$ if for every $x \in X$, there exists $y \in M$ such that $y \geq x$.
}


First, recall the notion of a {\em core point}. Given a vector space $X$ and a subset $A \subseteq X$, a point $a \in A$ is called a core point of $A$ if for every $x \in X$, there exists $\epsilon > 0$ such that $a + \lambda x \in A$ for all $0 \leq \lambda \leq \epsilon$. Some authors call such a point an {\em internal point} (see for instance Definition 5.58 in \cite{hitchhiker}). The following is a useful result for extending positive linear functionals.

\begin{theorem}[Krein-Rutman theorem, see Holmes \cite{holmes} p. 20] \label{thm:krein-rutman}
Let $X$ be a vector space ordered by $\succeq_P$ where $P$ is a pointed, convex cone in $X$. Furthermore, let $M$ be a linear subspace of $X$ ordered by $\succeq_{P \cap M}$. If $P \cap M$ contains a core point (with respect to $X$) of $P$, then any positive linear functional on $M$ admits a positive linear extension to all of $X$. In other words, if $\psi : M \to \R$ satisfies $\langle m, \psi \rangle  \ge 0$ for all $m \in M \cap P$,  then there exists a $\overline{\psi} : X \to \R$ with $\langle x,  \overline{\psi} \rangle \ge 0$ for all $x \in P$ and $\langle x,  \overline{\psi} \rangle = \langle x, \psi \rangle$ for all $x \in M$. 
\end{theorem}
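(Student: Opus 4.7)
My plan is to invoke the Hahn--Banach extension theorem after constructing an appropriate sublinear functional $p:X\to\R$ that majorizes $\psi$ on $M$, with the core point $m_0\in M\cap P$ supplying the gauge. If $\psi\equiv 0$ on $M$, the zero functional extends trivially, so I may assume $\psi\not\equiv 0$. In that case I would first show $\alpha:=\langle m_0,\psi\rangle>0$: for any $m\in M$, the core property of $m_0$ yields $\epsilon,\epsilon'>0$ with $m_0+\epsilon m$ and $m_0-\epsilon' m$ both in $P\cap M$, so positivity of $\psi$ gives $\alpha+\epsilon\psi(m)\ge 0$ and $\alpha-\epsilon'\psi(m)\ge 0$; setting $\alpha=0$ would force $\psi(m)=0$ for every $m\in M$, contradicting $\psi\not\equiv 0$.

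Next I would define
$$p(x)=\inf\{\lambda\in\R:\lambda m_0-x\in P\}$$
and show it is a real-valued sublinear functional on $X$. Nonemptiness of the defining set is immediate from the core property applied in direction $-x$ (giving $m_0-\epsilon x\in P$, so $\lambda=1/\epsilon$ works). Boundedness below is more delicate: if $\lambda m_0-x\in P$ and $m_0+\delta x\in P$ (the latter from the core property in direction $x$), then scaling the first relation by $\delta\ge 0$ and summing gives $(1+\lambda\delta)m_0\in P$; pointedness of $P$, together with $m_0\ne 0$ (since $m_0=0$ being a core point of $P$ would force $X=\{0\}$), then forces $1+\lambda\delta\ge 0$, i.e., $\lambda\ge -1/\delta$. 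Positive homogeneity and subadditivity of $p$ follow routinely from the conic structure.

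To apply Hahn--Banach I need $\psi(m)\le \alpha\,p(m)$ on $M$: if $\lambda m_0-m\in P$, then $\lambda m_0-m\in P\cap M$, and positivity of $\psi$ gives $\lambda\alpha\ge\psi(m)$, so taking the infimum yields the bound. Hahn--Banach then produces a linear extension $\bar\psi:X\to\R$ of $\psi$ with $\bar\psi\le\alpha p$ everywhere. To finish, for any $x\in P$ the choice $\lambda=0$ is admissible in the infimum defining $p(-x)$, so $p(-x)\le 0$, whence $\bar\psi(x)=-\bar\psi(-x)\ge -\alpha p(-x)\ge 0$, i.e., $\bar\psi$ is positive on $P$.

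The main obstacle, I expect, is verifying that $p$ takes finite values --- specifically the lower bound --- since this is where both pointedness of $P$ and the core hypothesis on $m_0$ are essentially used. Once $p$ is established as a genuine real-valued sublinear functional, the remaining pieces (the domination inequality, invocation of Hahn--Banach, and the positivity check on $P$) are routine.
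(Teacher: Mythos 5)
The paper does not prove this statement; it is quoted as a classical result with a citation to Holmes (p.~20), where the standard proof is exactly the Hahn--Banach argument you give, using the gauge $p(x)=\inf\{\lambda\in\R:\lambda m_0-x\in P\}$ associated with the core point. Your write-up is correct and complete: the finiteness of $p$ via pointedness, the domination $\psi\le\alpha p$ on $M$, and the positivity check $p(-x)\le 0$ for $x\in P$ are all handled properly, so this matches the intended (cited) proof.
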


\begin{lemma}\label{lem:limit-func-extend}
Every positive linear functional on $c$ can be extended to a positive linear functional on $\ell_\infty$.
\end{lemma}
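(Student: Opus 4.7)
The plan is to invoke the Krein-Rutman theorem (Theorem~\ref{thm:krein-rutman}) with $X = \ell_\infty$ ordered by the pointed convex cone $P = \ell_\infty \cap \R^\N_+$ (the bounded nonnegative sequences), and with $M = c$ as the linear subspace, so that $M \cap P = c \cap \R^\N_+$ is exactly the cone of nonnegative convergent sequences that defines ``positive'' on $c$. With this setup, the conclusion of Krein-Rutman is exactly the statement of the lemma, provided we can produce a point of $M \cap P$ that is a core point of $P$ in $X$.

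The natural candidate is the constant sequence $\mathbf{1} = (1,1,1,\ldots)$, which clearly lies in $c \cap \R^\N_+$. The key step is to verify that $\mathbf{1}$ is a core point of $P = \ell_\infty \cap \R^\N_+$ with respect to $X = \ell_\infty$. Given an arbitrary $x \in \ell_\infty$, boundedness gives some $B > 0$ with $|x(i)| \le B$ for all $i \in \N$. Taking $\epsilon = 1/(B+1)$, one checks that for every $0 \le \lambda \le \epsilon$ and every $i \in \N$,
\begin{equation*}
(\mathbf{1} + \lambda x)(i) = 1 + \lambda x(i) \ge 1 - \lambda B > 0,
\end{equation*}
so $\mathbf{1} + \lambda x$ is nonnegative, and it is bounded as the sum of two bounded sequences. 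Hence $\mathbf{1} + \lambda x \in P$ for all such $\lambda$, confirming that $\mathbf{1}$ is a core point of $P$.

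Having verified the hypothesis of Theorem~\ref{thm:krein-rutman}, any positive linear functional $\psi$ on $c$ (i.e., $\langle v, \psi \rangle \ge 0$ for $v \in c \cap \R^\N_+$) admits a linear extension $\overline{\psi}$ to $\ell_\infty$ with $\langle y, \overline{\psi}\rangle \ge 0$ for all $y \in \ell_\infty \cap \R^\N_+$, which is precisely what it means for $\overline{\psi}$ to be a positive linear functional on $\ell_\infty$. The main (and essentially only) obstacle is choosing the correct core point; once $\mathbf{1}$ is identified, the rest is a direct bookkeeping application of Krein-Rutman. Note that, by contrast with Lemma~\ref{lem:limit-func}, this works for $\ell_\infty$ and not for $\R^\N$ because the unbounded ``test'' sequence $v = (1,2,3,\ldots)$ used there lies outside $\ell_\infty$, so the obstruction to extending the limit functional disappears once the ambient space is restricted to bounded sequences.
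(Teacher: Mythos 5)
Your proof is correct and follows essentially the same route as the paper: both invoke the Krein--Rutman theorem with $P = \ell_\infty \cap \R^\N_+$ and $M = c$, and both verify that $\mathbf{1}$ is a core point of $P$ in $\ell_\infty$ using the boundedness of an arbitrary $x \in \ell_\infty$. The bookkeeping with $\epsilon = 1/(B+1)$ is a slightly more careful version of the paper's choice $\lambda \in (0, 1/\sup_n |a_n|)$, but the argument is the same.
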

\begin{proof} 
 Let $P =  \R^{\N}_+  \cap \ell_\infty$ be the nonnegative cone in $\ell_\infty$.  Take the convergent sequence $\mathbf{1} = (1,1,\dots)$. This convergent sequence is an element of  $P \cap c$.  Also, $\mathbf{1}$ is a core point of $P$ with respect to $\ell_\infty$.  To see that  $\mathbf{1}$ is a core point of $P$, take any sequence $(a_n)_{n \in \N} \in \ell_\infty.$  
Since $(a_n)$ is in  $\ell_\infty$,     $\sup_n |a_n|  < \infty$  and $\mathbf{1}    +  \lambda (a_n) \in P$ for all $\lambda \in (0, 1/\sup_n |a_n|).$  Since $P$ has a core point  with respect to $\ell_\infty$,  apply the Krein-Rutman theorem to extend positive  linear functionals defined on $c$ to positive linear functionals defined on $\ell_\infty$.
\end{proof}

\old{Observe that $c$ majorizes $\ell_\infty$ : for any $x \in \ell_\infty$, let $y\in c$ be the sequence whose every entry is $\lVert x \rVert_\infty$. By Theorem 8.32 in~Aliprantis and Border~\cite{hitchhiker} we can extend every positive linear functional on $c$ to a linear functional on $\ell_\infty$.}

\begin{corollary}\label{cor:limit-func-extend}
The limit functional defined in \eqref{eq:limit-func} can be extended to a positive linear functional over $\ell_\infty$. 
\end{corollary}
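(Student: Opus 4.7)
The plan is to observe that this is an essentially immediate consequence of Lemma~\ref{lem:limit-func-extend}, once we verify the limit functional is a positive linear functional on $c$. So the proof reduces to two steps, the first of which is a short verification and the second of which is a one-line invocation of the previous lemma.

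First, I would verify that the limit functional $\psi$ defined by $\langle v, \psi \rangle = \lim_{i\to\infty} v(i)$ in \eqref{eq:limit-func} is a positive linear functional on $c$. Linearity follows from the standard linearity of limits: for $v,w \in c$ and $\alpha,\beta \in \R$, $\lim_{i\to\infty}(\alpha v(i) + \beta w(i)) = \alpha \lim_{i\to\infty} v(i) + \beta \lim_{i\to\infty} w(i)$. For positivity, I would take any $v \in \R^\N_+ \cap c$; since $v(i) \ge 0$ for all $i \in \N$, the limit $\lim_{i\to\infty} v(i)$ is nonnegative, so $\langle v, \psi \rangle \ge 0$.

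Second, I would apply Lemma~\ref{lem:limit-func-extend} with $M = c$ and $X = \ell_\infty$ to the positive linear functional $\psi$, obtaining a positive linear functional $\overline{\psi}$ on $\ell_\infty$ whose restriction to $c$ coincides with $\psi$. This is precisely the desired extension, so the corollary follows.

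There is no substantive obstacle here: the work was already done in establishing Lemma~\ref{lem:limit-func-extend} via the Krein-Rutman theorem, where it was verified that the all-ones sequence is a core point of $\R^\N_+ \cap \ell_\infty$ lying in $c$. The only conceptual content in the corollary is noticing that the limit functional satisfies the hypothesis of the lemma, which is immediate.
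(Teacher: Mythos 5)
Your proposal is correct and matches the paper's (implicit) argument: the paper states earlier that the limit functional is clearly a positive linear functional on $c$, and the corollary is presented as an immediate consequence of Lemma~\ref{lem:limit-func-extend}. Your two-step verification is exactly what the paper intends.
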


\old{By Theorem~\ref{thm:no-gap},  if  $I = \N$ then  the  duality gap between (\ref{eq:SILP})  and  its  finite support dual~\eqref{eq:FDSILP} is equal to the duality gap between (\ref{eq:SILP})  and its algebraic Lagrangian  dual (\ref{eq:DSILP}).    However, if $I = \N$, and  the constraint space  of  (\ref{eq:SILP}) corresponds to a  subspace of $\R^\N,$  then the algebraic Lagrangian dual with respect to this subspace   can yield a smaller duality gap than the gap that results from the finite support dual.  In this case, using the finite support dual can create a duality gap.}

Example \ref{example:not-primal-optimal} below provides an affirmative answer to Question~\ref{q2}.  This example gives an  \eqref{eq:SILP} with $I=\N$, where $v(\ref{eq:SILP}) = v(\ref{eq:DSILPprime})$ and $v(\ref{eq:SILP}) > v(\ref{eq:FDSILP}) = v(\ref{eq:DSILP})$,  with the subspace $\mathcal{V} = c$ and $\mathcal{V} =\ell_\infty$.\old{This is illustrated next in Example~\ref{example:not-primal-optimal} where the constraint space is the subspace $c \subset \ell_{\infty} \subset \R^\N.$} The equality $v(\ref{eq:FDSILP}) = v(\ref{eq:DSILP})$  follows  from Theorem~\ref{thm:no-gap}.



\begin{example}\label{example:not-primal-optimal}
The  \eqref{eq:SILP} is
\begin{align}\label{eq:not-primal-optimal}
\begin{array}{rcl}
\inf x_{1} && \\
x_{1} + \tfrac{1}{i^{2}}x_{2} &\ge& \tfrac{2}{i}, \quad i \in \N. \\
\end{array}
\end{align}
We show that $(x_1, x_2) = (\frac{1}{\delta}, \delta)$ is feasible to \eqref{eq:not-primal-optimal} for all $\delta > 0$. For every $i \in \N$, 
$$\begin{array}{rcl}
&(i - \delta)^2 &\geq 0 \\
\Rightarrow & i^2 + \delta^2 - 2\delta i & \geq 0 \\
\Rightarrow & i^2 + \delta^2 & \geq 2\delta i \\
\Rightarrow & \frac{i^2 + \delta^2}{\delta i^2} &\geq  \frac{2\delta i}{\delta i^2} \\
\Rightarrow & \frac{1}{\delta} + \frac{\delta}{i^2} &\geq \frac{2}{i}.
\end{array} 
$$
Letting $\delta \to \infty$ gives feasible solutions whose objective values converge to an objective value of $0$. 

Next observe that if  $(\bar x_1, \bar x_2)$  is any feasible solution, then  $\overline{x}_{1} \ge 0.$ Indeed, taking $i \to \infty$ in \eqref{eq:not-primal-optimal} leaves $x_1 \ge 0$.
Therefore the  optimal primal objective value is  equal to $0$. 


The finite support dual \eqref{eq:FDSILP} for this semi-infinite linear program is infeasible.  The objective coefficient of $x_2$ is $0$ and the coefficient of $x_2$ is strictly positive in the constraints, and so the only possible dual element satisfying the dual constraint corresponding to $x_2$ is $u = (0,0,\dots, 0,\dots)$; however, the objective coefficient of $x_1$ is $1$ and the dual constraint corresponding to $x_1$ is not satisfied for $u=0$. This shows that using the finite support dual leads to an infinite duality gap.

In this example,  $a^{1}(i) = 1$ for all $i \in \N$ so this sequence converges to 1.   Also, $a^{2}(i) = \frac{1}{i^{2}}$ for all $i \in \N$  and this sequence converges to 0. \old{ Thus, for any $x \in \R^{2}$,  $A(x) \in c.$}  Likewise, $b(i) = \frac{2}{i}$ for all $i \in \N$ so this sequence converges to 0.  Therefore, this semi-infinite linear program is  an instance of \eqref{eq:conlp-primal} with $Y = c \textrm{ (or }\ell_{\infty}\textrm{)} \subset \R^\N.$ Now consider the algebraic Lagrangian dual~\eqref{eq:DSILPprime}    defined  on  the space of positive linear functionals on $c$ (or $\ell_\infty$).   The limit functional  $\psi$ defined in 
\eqref{eq:limit-func} is  $\langle v, \psi \rangle = \lim_{i\to \infty} v (i)$, for   all $v \in \R^{\N}_+ \cap c$.  
This  is a positive linear functional on $c$ (as is its extension to $\ell_\infty$ by Corollary~\ref{cor:limit-func-extend}) since any convergent sequence of nonnegative numbers has a nonnegative limit.   As observed earlier, $a^{1}(i)$ converges   to 1 which is  the coefficient of $x_1$ in the objective, and $a^{2}(i) $ converges to 0 which is  the coefficient of $x_2$ in the objective. Therefore $\psi$ is a feasible dual solution. The dual objective value  $\langle b, \psi   \rangle$  is $\lim_{i \in \N} b(i) = \lim_{i \in \N} \frac{2}{i} = 0$. This is the optimal value of the primal and therefore the duality gap is zero. 

This example illustrates 
Lemma~\ref{lem:limit-func}.  If $\psi$ could be extended to a  positive linear functional $\overline{\psi}$ on $\R^\N$, then $\overline{\psi}$ would be a feasible solution to~\eqref{eq:DSILP} with objective function value zero implying  a zero duality gap between~\eqref{eq:not-primal-optimal} and ~\eqref{eq:DSILP}.  Since there is an infinite duality gap between~\eqref{eq:not-primal-optimal}    and  its finite support dual~\eqref{eq:FDSILP}, this would contradict  Theorem~\ref{thm:no-gap}. \hfill$\triangleleft$
\end{example}

\old{
\begin{example}\label{example:not-primal-optimal}
Consider the following instance of \eqref{eq:SILP}:
\begin{align}\label{eq:not-primal-optimal}
\begin{array}{rcl}
\inf x_{1} && \\
x_{1} + \tfrac{1}{t^{2}}x_{2} &\ge& \tfrac{1}{t^{2}} + \tfrac{1}{t}, \quad t \ge 1 \\
          x_1 \phantom{ + \tfrac{1}{t^{2}}x_{2}} & \ge & 0
\end{array}
\end{align}
\old{Applying the Fourier-Motzkin procedure to
\begin{align}\label{eq:full-system-example}
\begin{array}{rcl}
-x_1 \phantom{ + \tfrac{1}{t^{2}} x_{2}} + z &\ge& 0 \\
\phantom{-}x_1 + \tfrac{1}{t^{2}} x_{2}  \phantom{ + z} &\ge& \tfrac{1}{t^{2}} + \tfrac{1}{t}, \quad t \ge 1 \\
x_1 \phantom{ + \tfrac{1}{t^{2}} x_{2} + z} & \ge & 0
\end{array}
\end{align}
yields (by eliminating $x_1$)
\begin{equation}\label{eq:project-out-x1}
\begin{array}{rcl}
\tfrac{1}{t^{2}} x_{2} + z  &\ge& \tfrac{1}{t^{2}} + \tfrac{1}{t}, \quad t \ge 1 \\ 
\phantom{\frac{1}{t^{2}} x_{2} +}  z & \ge & 0.
\end{array}
\end{equation}
The only $I_3$ constraint is $z \ge 0$ and so $\sup_{h \in I_3} \tilde{b}(h) = 0$. We claim that for $\delta \ge 1$
\begin{eqnarray*}
\omega(\delta) = \sup_{t \ge 1} \left\{ \tfrac{1}{t^{2}} + \tfrac{1}{t}  -   \tfrac{\delta}{t^{2}} \right\} = \tfrac{1}{\delta^{2}}. 
\end{eqnarray*}
Note $\frac{\delta}{t^2} \le \frac{t}{t^2} = \frac{1}{t}$ whenever $t \ge \delta$ and so $\frac{1}{t^2} + \frac{1}{t}- \frac{\delta}{t^2} \le \frac{1}{t^2} \le \frac{1}{\delta^2}$ whenever $t \ge \delta$. On the other hand, when $1 \le t < \delta$ note that $\frac{1}{t} > \frac{1}{\delta}$ and $\frac{1}{t^2} > \frac{1}{\delta^2}$ and so $\frac{1}{t^2} + \frac{1}{t} - \frac{\delta}{t^2} \le \frac{1}{\delta^2} + \frac{1}{\delta} - \frac{\delta}{\delta^2} = \frac{1}{\delta^2}$. Note further that taking $t = \delta$ (this is appropriate since we assumed $\delta \ge 1$) gives $\frac{1}{t^2} + \frac{1}{t} - \frac{\delta}{t^2} = \frac{1}{\delta^2}$. Thus, we can conclude $\omega(\delta) = \frac{1}{\delta^2}$ for $\delta \ge 1$.   
Clearly then, $\lim_{\delta \rightarrow \infty} \omega(\delta) =    0 = \sup_{h \in I_3} \tilde{b}(h)$ and so by Lemma~\ref{lemma:primal-optimal-value} the optimal value is $0$. 

However, for $z = 0$ the system \eqref{eq:project-out-x1} has no possible feasible assignment for $x_2$. Indeed, for any proposed $\bar x_2$ take $t \ge \bar x_2$ then $ \tfrac{1}{t^{2}} \bar x_{2} + 0  \le \tfrac{1}{t} < \tfrac{1}{t^2} + \frac{1}{t}$, which implies $(\bar x_2, 0)$ is infeasible to \eqref{eq:project-out-x1}. Thus, the primal is not solvable. 
%

On the other hand, note that $(x_2, z) = (\delta, \frac{1}{\delta^2})$ is feasible to \eqref{eq:project-out-x1} whenever $\delta \ge 1$. 
Recursing back to system \eqref{eq:full-system-example} and letting $x_{2} = \delta$ and $z = \frac{1}{\delta^2}$ with $\delta \ge 2$ gives 
\begin{eqnarray*}
- x_{1} + \tfrac{1}{\delta^2} &\ge& 0 \\
\phantom{-} x_{1} + \tfrac{\delta}{t^{2}} &\ge& \tfrac{1}{t^{2}} + \tfrac{1}{t}, \quad t \ge 1 \\
x_1 &\ge& 0
\end{eqnarray*}
setting $t = \delta$ yields 
\begin{eqnarray*}
x_1 &\le& \tfrac{1}{\delta^2} \\
x_1 &\ge& \tfrac{1}{\delta^2},
\end{eqnarray*}
implying $x_1 = \frac{1}{\delta^2}$. Therefore for $\delta \ge 1$, $(x_1, x_2) = (\frac{1}{\delta^2},\delta)$ is feasible to \eqref{eq:not-primal-optimal} with objective value $\frac{1}{\delta^2}$. Letting $\delta \to \infty$ gives a sequence of feasible solutions whose objective values converge to the optimal value of $0$, but as observed above, there is no feasible solution that attains this value.}

We claim that $(x_1, x_2) = (\frac{1}{\delta^2}, \delta)$ is feasible to \eqref{eq:not-primal-optimal} whenever $\delta \ge 1$. 
Note $\frac{\delta}{t^2} \le \frac{t}{t^2} = \frac{1}{t}$ whenever $t \ge \delta$ and so $\frac{1}{t^2} + \frac{1}{t}- \frac{\delta}{t^2} \le \frac{1}{t^2} \le \frac{1}{\delta^2}$ whenever $t \ge \delta$. Thus, $\frac{1}{t^2} + \frac{1}{t} \le \frac{1}{\delta^2}+\frac{\delta}{t^2}$ for all $t \geq \delta$. On the other hand, when $1 \le t < \delta$ note that $\frac{1}{t} > \frac{1}{\delta}$ and $\frac{1}{t^2} > \frac{1}{\delta^2}$ and so $\frac{1}{t^2} + \frac{1}{t} - \frac{\delta}{t^2} \le \frac{1}{\delta^2} + \frac{1}{\delta} - \frac{\delta}{\delta^2} = \frac{1}{\delta^2}$. Thus, $\frac{1}{t^2} + \frac{1}{t} \le \frac{1}{\delta^2}+\frac{\delta}{t^2}$ for all $1 \le t < \delta$. Letting $\delta \to \infty$ gives a sequence of feasible solutions whose objective values converge to the optimal value of $0$. Since $x_1 \geq 0$ is a constraint, the optimal primal objective value is thus equal to $0$. 


It is easy to see that the finite support dual for this semi-infinite linear program is infeasible: The objective coefficient of $x_1$ in the primal is $0$ and the coefficient of $x_1$ is strictly positive in the constraints, and so the only possible dual element satisfying the dual constraint correpsonding to $x_1$ is $u = 0$; however, the objective coefficient of $x_2$ is $1$ and thus, we cannot satisfy the dual constraint corresponding to $x_2$. This shows that using the finite support dual leads to an infinite duality gap. On the other hand, if we view the constraint space as $c$ (or $\ell_\infty$) instead of $\R^\N$, then the limit functional defined in Lemma~\ref{lem:limit-func} (respectively, its extension to $\ell_\infty$ by Lemma~\ref{lem:limit-func-extend}) gives an optimal dual solution with optimal value $0$, giving zero duality gap. \hfill$\triangleleft$
\end{example}
}






\section*{Acknowledgements}

\noindent The authors  are very grateful to Greg Kuperberg, University of California Davis,  for discussions that led to the proof of Lemma~\ref{lemma:pos-functionals}. We are also thankful for the useful suggestions of our anonymous reviewer. 

\bibliographystyle{plain}
\bibliography{../../references/references}

\begin{thebibliography}{10}

\bibitem{hitchhiker}
C.D. Aliprantis and K.C. Border.
\newblock {\em Infinite Dimensional Analysis: A Hitchhiker's Guide}.
\newblock Springer Verlag, second edition, 2006.

\bibitem{anderson-nash}
E.J. Anderson and P.~Nash.
\newblock {\em Linear Programming in Infinite-Dimensional Spaces: {T}heory and
  Applications}.
\newblock Wiley, 1987.

\bibitem{charnes1963duality}
A.~Charnes, W.W. Cooper, and K.~Kortanek.
\newblock Duality in semi-infinite programs and some works of {H}aar and
  {C}arath\'{e}odory.
\newblock {\em Management Science}, 9(2):209--228, 1963.

\bibitem{goberna2010post}
M.A. Goberna.
\newblock Post-optimal analysis of linear semi-infinite programs.
\newblock In {\em Optimization and Optimal Control}, pages 23--53. Springer,
  2010.

\bibitem{goberna2007sensitivity}
M.A. Goberna, S.~G{\'o}mez, F.~Guerra, and M.I. Todorov.
\newblock Sensitivity analysis in linear semi-infinite programming: perturbing
  cost and right-hand-side coefficients.
\newblock {\em European journal of operational research}, 181(3):1069--1085,
  2007.

\bibitem{goberna1998linear}
M.A. Goberna and M.A. L{\'o}pez.
\newblock {\em Linear semi-infinite optimization}.
\newblock John Wiley \& Sons, Chichester, 1998.

\bibitem{goberna1996stability}
M.A. Goberna, M.A. L{\'o}pez, and M.~Todorov.
\newblock Stability theory for linear inequality systems.
\newblock {\em SIAM Journal on Matrix Analysis and Applications},
  17(4):730--743, 1996.

\bibitem{haar1924}
A.~Haar.
\newblock Uber lineare ungleichungen.
\newblock {\em Acta Math. Szeged}, 2:1--14, 1924.

\bibitem{holmes}
R.B. Holmes.
\newblock {\em Geometric Functional Analysis and its Applications}.
\newblock Springer-Verlag New York, 1975.

\bibitem{karney81}
D.F. Karney.
\newblock Duality gaps in semi-infinite linear programming -- an approximation
  problem.
\newblock {\em Mathematical Programming}, 20:129--143, 1981.

\bibitem{karney85}
D.F. Karney.
\newblock In a semi-infinite program only a countable subset of the constraints
  is essential.
\newblock {\em Journal of Approximation Theory}, 20:129--143, 1985.

\bibitem{lopez2012stability}
M.A. L{\'o}pez.
\newblock Stability in linear optimization and related topics. {A} personal
  tour.
\newblock {\em Top}, 20(2):217--244, 2012.

\bibitem{nash1985algebraic}
P.~Nash.
\newblock Algebraic fundamentals of linear programming.
\newblock In {\em Infinite Programming}, pages 37--52. Springer, 1985.

\bibitem{ochoa2012stability}
P.D. Ochoa and V.~de~Serio.
\newblock Stability of the primal-dual partition in linear semi-infinite
  programming.
\newblock {\em Optimization}, 61(12):1449--1465, 2012.

\end{thebibliography}

\appendix

\section{Details on Remark~\ref{remark:algebraic-dual-big-cone-small}}\label{ap:no-interior}

\noindent This short appendix contains ancillary material pertaining to Remark~\ref{remark:algebraic-dual-big-cone-small} and the unsubstantiated claim that $\R^\N$ has no interior points in any linear topology. This claim follows from Corollary~9.41 in \cite{hitchhiker} and the fact $\R^\N$ has no order unit. An \text{order unit} $e$ in a Riesz space $X$ is a positive element where for every vector $x \in X$ there exists a $\lambda > 0$ such that $|x| \le \lambda e$. See page 322 of \cite{hitchhiker} for more details.  To see that $\R^\N$ has no order unit, consider candidate vector $e = (e_1,e_2, \dots)$.  There are two cases to consider. Case 1:  the components $e$  are bounded. Then there exists an $M >1$ such that $e_i \le M$ for all $i$. Take $x = (M, M^2, \dots, M^n, \dots)$. Then  $e$ is not an order unit since, for sufficiently large $n$, there does not exist $\lambda > 0$ such that $M^n \le \lambda e_i$ for all $i$.
Case 2: the components of $e$ are unbounded.  Then $e$  contains a subsequence $e_{i_k}$ where $e_{i_k} \to \infty$.  Now consider $x$ where $x_i = e_i^2$ for $i = 1, 2, \dots$. Setting $\lambda \ge e_{i_k}^2/e_{i_k} = e_{i_k}$ for all $k$ is impossible since $e_{i_k} \to \infty$. Thus, $e$ is not an order unit since $e_{i_k} \to \infty$.







\end{document}